\documentclass[11pt]{amsart}

\usepackage{amsmath,amssymb,amsthm}
\usepackage[colorlinks=true,linkcolor=blue,citecolor=blue,urlcolor=blue]{hyperref}

\newtheorem{theorem}{Theorem}[section]
\newtheorem{proposition}[theorem]{Proposition}
\newtheorem{lemma}[theorem]{Lemma}
\newtheorem{corollary}[theorem]{Corollary}
\newtheorem*{maintheorem}{Main Theorem}

\theoremstyle{definition}
\newtheorem{definition}[theorem]{Definition}
\newtheorem{problem}[theorem]{Problem}
\theoremstyle{remark}
\newtheorem{remark}[theorem]{Remark}

\newcommand{\C}{\mathbb{C}}

\newcommand{\dec}{\mathrm{dec}}
\newcommand{\cb}{\mathrm{cb}}
\newcommand{\Wst}{W^*}
\newcommand{\U}{\mathcal{U}}
\newcommand{\linf}{\ell^\infty_3}
\newcommand{\iu}{\mathrm{i}}

\begin{document}

\title[Haagerup's Problem]{A solution to Haagerup's Problem on decomposable maps}

\author{Rui OKAYASU}
\address{Department of Mathematics Education, Osaka Kyoiku University, Kashiwara, Osaka 582-8582, JAPAN}
\email{rui@cc.osaka-kyoiku.ac.jp}

\subjclass[2020]{Primary 46L07; Secondary 46L10, 47L25}
\keywords{decomposable maps, completely bounded maps, injective von Neumann algebras, generator problem}
\thanks{The author is partially supported by JSPS KAKENHI Grant Number JP26K06843.}

\begin{abstract}
We prove that a von Neumann algebra $N$ is injective if and only if $\|T\|_{\mathrm{dec}}=\|T\|_{\mathrm{cb}}$ for every
linear map $T:\ell^\infty_3\to N$, answering a problem of Haagerup (1985).
\end{abstract}

\maketitle


\section{Introduction}

For $C^*$-algebras $A,B$, Haagerup \cite{Haa85} introduced the decomposable norm $\|\cdot\|_{\dec}$ on the
space of linear combinations of completely positive (CP) maps $A\to B$, and showed that
$\|T\|_{\cb}\le\|T\|_{\dec}$. When $B$ is injective, every completely bounded (CB) map $A\to B$ is decomposable with
$\|T\|_{\dec}=\|T\|_{\cb}$; this follows from the decomposition theorems of Wittstock \cite{Wittstock} and Paulsen \cite{Paulsen}, as
observed in \cite{Haa85}. His main theorem
\cite[Theorem~2.1]{Haa85} gives the converse for von Neumann algebras $N$: if
$\|T\|_{\dec}=\|T\|_{\cb}$ for every $n$ and every linear map $T:\ell^\infty_n\to N$, then $N$ is injective.
For $n=2$ the two norms always coincide \cite[Proposition~3.4]{Haa85}, and Haagerup asked:

\begin{problem}[{\cite[Problem~3.8]{Haa85}}]
Let $N$ be a von Neumann algebra such that $\|T\|_{\cb}=\|T\|_{\dec}$ for every linear map
$T:\ell^\infty_3\to N$. Is $N$ injective?
\end{problem}

\begin{definition}
A $C^*$-algebra $B$ satisfies \textbf{(H3)} if $\|T\|_{\dec}=\|T\|_{\cb}$ for every linear map
$T:\linf\to B$. Here $\|T\|_{\dec}$ is computed with respect to $B$ as the range.
\end{definition}

Pisier \cite[Remark~23.4]{Pis20} records that the problem is open and that two cases are known:
\begin{itemize}
\item the case where $N$ has a cyclic vector and $N'$ is generated by two unitaries;
\item the case where (H3) holds for all matrix algebras $M_k(N)$ (Junge--Le Merdy \cite{JLM}).
\end{itemize}
The first case ties the problem to the generator problem, and the second requires information at
all matrix levels, which (H3) does not provide. We prove:

\begin{maintheorem}\label{thm:main}
Every von Neumann algebra satisfying \textup{(H3)} is injective.
\end{maintheorem}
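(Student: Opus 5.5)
The plan is to reduce to Haagerup's theorem \cite[Theorem~2.1]{Haa85}. That theorem needs $\|T\|_{\dec}=\|T\|_{\cb}$ for \emph{all} $n$ and all $T:\ell^\infty_n\to N$. So the task is to show that (H3) for $N$ already forces equality for maps from $\ell^\infty_n$. Equivalently, by the Junge--Le Merdy criterion, it suffices to show that (H3) passes from $N$ to every matrix amplification $M_k(N)$.

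I will organize this through Haagerup's characterization of the decomposable norm of maps on $\ell^\infty_n$. A map $T:\ell^\infty_n\to N$ is determined by the elements $a_j=T(e_j)$. Then $\|T\|_{\cb}$ equals the infimum of $\|\sum x_jx_j^*\|^{1/2}\|\sum y_j^*y_j\|^{1/2}$ over factorizations $a_j=x_jy_j$ in $B(H)$. By contrast, $\|T\|_{\dec}$ is the same infimum with $x_j,y_j$ required to lie in $N$ (suitably interpreted via $2\times2$ positive matrices with diagonal in $N$). Thus (H3) says: whenever three elements $a_1,a_2,a_3\in N$ admit an optimal row/column factorization over $B(H)$, they admit one with the same norm inside $N$.

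The key step is an encoding trick. Given $n$ elements of $N$, or a triple in $M_k(N)$, I want to build a triple in $N$ itself, or in a corner $pNp$ with $p$ a projection of suitable multiplicity, whose cb versus dec behaviour reflects that of the original family. For the amplification, if $N$ is properly infinite, then $M_k(N)\cong N$ via isometries $v_1,\dots,v_k\in N$ with $\sum v_iv_i^*=1$. This transfers (H3) from $N$ to $M_k(N)$, after checking that both norms are invariant under such spatial $*$-isomorphisms, which is clear. So for properly infinite $N$, Junge--Le Merdy finishes the proof. For $n$ points directly, I would group $\ell^\infty_n$ into three blocks and use the isometries to compress each block $(a_j)_{j\in J}$ into a single element $\sum_{j\in J} v_ja_jw_j^*$. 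The goal is to show that the row/column norms of a factorization in $N$ can be decoded back.

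The remaining case is the finite summand. Since $N=N_{\mathrm{fin}}\oplus N_{\mathrm{pi}}$, and both (H3) and injectivity split over central summands, I need (H3) $\Rightarrow$ injective for finite $N$. I would first try tensoring: show that (H3) for $N$ implies (H3) for $N\bar\otimes B(\ell^2)$, and then apply the properly infinite case, using that injectivity of $N\bar\otimes B(\ell^2)$ gives injectivity of $N$. This inheritance is the main obstacle. It is essentially the matrix-level information that (H3) seems not to provide.

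My first attempt at this obstacle is to use the finite trace. If $N$ is finite and non-injective, Connes' theorem gives unitaries $u_1,\dots,u_m$ with $\|\sum u_j\otimes\bar u_j\|<m$. I then try to realize a non-injectivity witness with only three summands. One option is a $3$-point map $T(e_1)=1$, $T(e_2)=u$, $T(e_3)=v$ for unitaries $u,v$ generating a non-amenable configuration, mimicking the $\mathbb F_2$ example. I would compute $\|T\|_{\cb}$ by the $B(H)$ factorization, which gives $\sqrt3$-type bounds. Then I would show that $\|T\|_{\dec}$ is strictly larger, via a trace argument: a factorization inside $N$ achieving the cb value would force $\tau$-averaging identities that yield an invariant mean-like hypertrace condition on $\{u,v\}$. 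The step I expect to be hardest is finding, in an arbitrary non-injective finite $N$, two unitaries whose generated pair witnesses non-amenability strongly enough, since Connes only supplies finitely many unitaries. Compressing them to two unitaries is exactly the generator-type difficulty. I would try to handle it by the same isometry or matrix-unit coding inside $M_k(N)\cong pNp\otimes M_k$, which is available in a II$_1$ factor summand.
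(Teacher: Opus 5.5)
Your properly infinite case is correct and matches the paper: $M_k(N)\cong N$, (H3) is invariant under $*$-isomorphisms, and Junge--Le Merdy (Proposition~\ref{prop:pi}) applies. Finite type I summands are injective for free.

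\textbf{The gap is the type II$_1$ summand.} For it you offer only strategies that you yourself leave unresolved.
\begin{itemize}
\item Transferring (H3) to $N\bar\otimes B(\ell^2)$ is exactly the all-matrix-level information that (H3) does not provide. That route assumes what is to be proved.
\item Your block encodings $\sum_{j\in J}v_ja_jw_j^*$ are unavailable here. In a finite algebra every isometry is a unitary, so there are no $v_1,\dots,v_k\in N$ with $\sum_i v_iv_i^*=1$ for $k\ge2$.
\item The trace-witness route stalls, as you note, on finding two unitaries that witness non-injectivity, i.e.\ on the generator problem. Your ``matrix-unit coding'' is named but never turned into an argument.
\end{itemize}

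The paper closes this case with three ideas absent from your plan.

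(i) After splitting the II$_1$ part into $\sigma$-finite summands, each has a faithful normal tracial state. (H3) then passes to \emph{every} von Neumann subalgebra, because the trace-preserving conditional expectation is a CP contraction onto it (Lemma~\ref{lem:heredity}, Corollary~\ref{cor:heredity}(b)). With Effros--Lance locality (Lemma~\ref{lem:locality}), this reduces matters to showing $\Wst(u_1,\dots,u_m)$ is injective.

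(ii) Pisier's theorem is the injectivity detector: a singly generated algebra with a faithful normal state satisfying (H3) is injective. The reason is that in standard form $B'=J_\varphi BJ_\varphi$ is then generated by two unitaries (Corollary~\ref{cor:single}).

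(iii) The generator problem is bypassed by a bootstrap.
\begin{itemize}
\item Using $4\times4$ matrix units, let $C_t\subseteq e_{11}Ne_{11}$ be generated by the first $t$ real and imaginary parts of the entries $e_{1i}u_ne_{j1}$.
\item If $M_4(C_{t-1})$ is injective, then so is $C_{t-1}$. It has separable predual, so it is singly generated by the classical results behind Proposition~\ref{prop:K1}.
\item Hence $C_t$ is generated by three self-adjoint elements, and $M_4(C_t)$ is singly generated by the explicit element $D+\iu Y$ of Lemma~\ref{lem:matrix}.
\item By (i) and (ii), $M_4(C_t)$ is injective.
\item Finally, $\Wst(u_1,\dots,u_m)\subseteq M_4(C_L)$ is the range of a trace-preserving conditional expectation, hence injective.
\end{itemize}
Single generation is only ever needed for an algebra obtained from one already known to be injective by adjoining one element. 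That is exactly what removes your two-unitaries obstacle.
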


The proof does not use the recent solution of the generator problem \cite{OAIgen}; with it, the proof
becomes much shorter (Remark~\ref{rem:OAI}). 

\section{Preliminaries}\label{sec:known}

Let $A, B$ be $C^*$-algebras.
We denote by $\mathrm{CP}(A,B)$
the set of all CP maps from $A$ to $B$.
For a linear map $T\colon A\to B$,
\begin{equation}\label{eq:dec}
\|T\|_{\dec}=\inf\big\{\max\{\|S_1\|,\|S_2\|\}\big\},
\end{equation}
where the infimum is taken over all $S_1,S_2\in \mathrm{CP}(A,B)$ such that
\[
x\mapsto\begin{bmatrix} S_1(x) & T(x)\\ T(x^*)^* & S_2(x)\end{bmatrix}\in \mathrm{CP}(A,M_2(B)),
\]
with the convention $\|T\|_{\dec}=\infty$ if no such $S_1,S_2$ exist.

For $C^*$-algebras $A,B,C$, we have the following \cite[Proposition~1.3]{Haa85}:
\begin{enumerate}
\item[(D1)] $\|T\|_{\cb}\le\|T\|_{\dec}$ for every decomposable map $T\colon A\to B$;
\item[(D2)] $\|ST\|_{\dec}\le\|S\|_{\dec}\|T\|_{\dec}$ for decomposable maps $T\colon A\to B$ and $S\colon B\to C$;
\item[(D3)] $\|E\|_{\dec}=\|E\|$ for every CP map $E\colon A\to B$.
\end{enumerate}

For a von Neumann algebra $M$, $\U(M)$ denotes its unitary group and
$\Wst(S)$ the von Neumann algebra generated by a subset $S$ and the unit. 
A von Neumann algebra $M$ is \emph{singly generated} if $M=\Wst(z)$ for some $z\in M$, equivalently if
$M=\Wst(h_1,h_2)$ for two self-adjoint elements $h_1,h_2$ (take $z=h_1+\iu h_2$).

A von Neumann algebra $M\subseteq B(H)$ is \emph{injective} if there is a projection of norm one from $B(H)$ onto $M$.
Recall that injectivity does not depend on the choice of a faithful normal representation of $M$.

 The next lemma is a special case of \cite[Proposition~5.7]{EL}.

\begin{lemma}\label{lem:locality}
Let $N\subseteq B(H)$ be a von Neumann algebra such that $\Wst(F)$ is injective for every finite subset
$F\subseteq\U(N)$. Then $N$ is injective.
\end{lemma}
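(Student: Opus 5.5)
This is a special case of \cite[Proposition~5.7]{EL}, so the plan is to run the standard Effros--Lance inductive-limit argument in this simple setting. We realize injectivity through conditional expectations onto subalgebras, and then pass to a limit point-weak$^*$.

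First, we index the directed set $\Lambda$ of finite subsets $F\subseteq\U(N)$, ordered by inclusion. For each $F$, the algebra $N_F:=\Wst(F)\subseteq N\subseteq B(H)$ is injective by hypothesis. Here we use that injectivity is independent of the faithful normal representation, applied to the given inclusion $N_F\subseteq B(H)$. So there is a projection of norm one $E_F\colon B(H)\to N_F$. By Tomiyama's theorem, $E_F$ is completely positive, unital, and $N_F$-bimodular. Viewed as a map $B(H)\to B(H)$, it is a contraction.

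Second, we take a limit. The unit ball of the space of bounded maps $B(H)\to B(H)$ is compact in the point-weak$^*$ topology, since it is $\prod_{x}\{\,y:\|y\|\le\|x\|\}$ with weak$^*$ compact factors (Tychonoff). Hence the net $(E_F)_{F\in\Lambda}$ has a cluster point $E$, which is a unital contraction $B(H)\to B(H)$. We then check two properties.
\begin{enumerate}
\item[(a)] \emph{$E$ restricts to the identity on $N$.} If $u\in\U(N)$, then $u\in N_F$ for all $F\ni u$, i.e.\ eventually. So $E_F(u)=u$ eventually, and hence $E(u)=u$. Every element of $N$ is a linear combination of four unitaries of $N$, so $E|_N=\mathrm{id}$.
\item[(b)] \emph{$E$ maps into $N$.} We have $E_F(x)\in N_F\subseteq N$, and $N$ is weak$^*$ closed, so $E(x)\in N$.
\end{enumerate}
Thus $E$ is a projection of norm one from $B(H)$ onto $N$, and $N$ is injective.

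The only delicate point is (b), namely that the target stays inside $N$. This is exactly why we work with subalgebras $N_F\subseteq N$ rather than arbitrary injective algebras. The weak$^*$ closedness of $N$ in $B(H)$ then settles it. Nothing beyond compactness is needed, because the norm bound $\|E\|\le 1$ passes to point-weak$^*$ limits by weak$^*$ lower semicontinuity of the norm.
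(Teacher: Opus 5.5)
Your argument is correct and complete. The paper gives no proof of this lemma; it only records it as a special case of \cite[Proposition~5.7]{EL}. You instead supply a self-contained point-weak$^*$ compactness proof.

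It is worth making explicit why your direct route works. Every element of $N$ is a linear combination of four unitaries, so each $x\in N$ already lies in some $\Wst(F)$. Hence $\bigcup_F\Wst(F)=N$, and the cluster point $E$ fixes all of $N$ directly.

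In the more general locality principle, the injective subalgebras may only have weak$^*$-dense union. There step (a), not (b), is the real difficulty, because a cluster point of the $E_F$ need not be normal. One then passes to commutants: take projections onto the algebras $\Wst(F)'$. Their cluster point maps into $\bigcap_F\Wst(F)'=N'$ and fixes $N'$, and one concludes because $N$ is injective if and only if $N'$ is. Your argument avoids this step entirely, at the cost of covering only the special case, which is all the paper needs.

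Two cosmetic points:
\begin{itemize}
\item The unit ball of bounded linear maps on $B(H)$ is a closed subset of $\prod_x\{y:\|y\|\le\|x\|\}$, since linearity is a closed condition. It is not the whole product.
\item Tomiyama's theorem is not needed. You only use that each $E_F$ is a contraction with range in $\Wst(F)$ that fixes $\Wst(F)$.
\end{itemize}
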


The proposition below is an immediate consequence of the Junge--Le Merdy theorem {\cite[Proposition 3.5]{JLM}}.
This is also how the result is quoted in \cite[Remark~10.7]{Pis19}

\begin{proposition}[Junge--Le Merdy]\label{prop:pi}
Let $M$ be a von Neumann algebra such that $M_k(M)$ satisfies \textup{(H3)} for every $k\ge1$.
Then $M$ is injective.
\end{proposition}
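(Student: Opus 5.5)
The plan is to reduce the statement to Haagerup's main theorem \cite[Theorem~2.1]{Haa85}. That theorem says $N$ is injective once $\|T\|_{\dec}=\|T\|_{\cb}$ for \emph{every} $n$ and every $T:\ell^\infty_n\to N$. So, assuming (H3) for all $M_k(M)$, I will show that every $T:\ell^\infty_n\to M$ satisfies $\|T\|_{\dec}\le\|T\|_{\cb}$. The reverse inequality is (D1).

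Fix $n$ and $T:\ell^\infty_n\to M$ with $\|T\|_{\cb}\le 1$. The idea is to encode $T$ into a map $\tilde T:\linf\to M_k(M)$ for a suitable $k$ depending on $n$, satisfying two conditions.
\begin{enumerate}
\item[(a)] $\|\tilde T\|_{\cb}\le\|T\|_{\cb}$.
\item[(b)] $T=\Psi\circ\tilde T\circ\Phi$ for CP contractions $\Phi:\ell^\infty_n\to\linf$ (or into $M_m(\linf)$, with $\tilde T$ amplified) and $\Psi:M_k(M)\to M$. The natural choice for $\Psi$ is a compression $X\mapsto V^*XV$ by a scalar isometry or coisometry built from matrix units.
\end{enumerate}
Given (a) and (b), (D2) and (D3) yield
\[
\|T\|_{\dec}\le\|\Psi\|\,\|\tilde T\|_{\dec}\,\|\Phi\|\le\|\tilde T\|_{\dec}=\|\tilde T\|_{\cb}\le\|T\|_{\cb},
\]
where the equality $\|\tilde T\|_{\dec}=\|\tilde T\|_{\cb}$ is exactly (H3) for $M_k(M)$. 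Haagerup's theorem then gives injectivity.

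The substance is the construction of $\tilde T$, and I expect this to be the main obstacle. I would use the concrete descriptions of both norms for maps on $\ell^\infty_n$ with $a_i=T(e_i)$.
\begin{itemize}
\item The cb norm is the norm of $\sum_i a_i\otimes u_i$ in $M\otimes_{\min}C^*(\mathbb F_{n})$ (free unitaries, up to normalization).
\item The dec norm is the Haagerup-tensor type infimum $\inf\|\sum x_ix_i^*\|^{1/2}\|\sum y_i^*y_i\|^{1/2}$ over factorizations $a_i=x_iy_i$.
\end{itemize}
Since $C^*(\mathbb F_2)$ contains $C^*(\mathbb F_\infty)$ via words like $u^jvu^{-j}$, and such words can be realized by products of unitaries in matrix algebras, I would try to realize $n$ free unitaries as block entries of matrices over three generators. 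Concretely, I would place the $a_i$ in off-diagonal positions of a block matrix in $M_k(M)$, indexed by a cyclic permutation pattern, so that $\tilde T(e_1),\tilde T(e_2),\tilde T(e_3)$ are, respectively, a shift-type scalar matrix, its adjoint pattern, and a block-diagonal matrix carrying the $a_i$. The cb norm computed via free unitaries then matches that of $T$. Compressing to a corner recovers $a_i$, giving $\Psi$.

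The delicate point is checking that the CP map $\Phi$ and compression $\Psi$ genuinely return $T$ rather than a sum mixing the $a_i$. If exactness fails, I would instead aim for an inequality form of (b), $\|T\|_{\dec}\le\|\tilde T\|_{\dec}$, proved directly. The plan there is to take a decomposition $(S_1,S_2)$ for $\tilde T$ realizing its dec norm and compress it by $V$ to obtain a decomposition for $T$ of no larger norm.
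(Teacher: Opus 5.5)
\paragraph{There is a genuine gap: the construction of $\tilde T$.}
Your outer reduction is sound. Suppose that for every $T:\ell^\infty_n\to M$ you had $\tilde T:\linf\to M_k(M)$ satisfying (a) and (b). Then (D2), (D3), the easy inequality $\|\mathrm{id}_{M_m}\otimes\tilde T\|_{\dec}\le\|\tilde T\|_{\dec}$ and (H3) for $M_k(M)$ would give $\|T\|_{\dec}\le\|T\|_{\cb}$, and Haagerup's Theorem~2.1 would finish. But the existence of such $\tilde T$ is the entire content of the proposition, and you leave it open. (The paper does not prove the proposition either; it quotes \cite[Proposition~3.5]{JLM}.)

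The construction you sketch cannot work:
\begin{itemize}
\item With $\Phi$ valued in $\linf$, the composite $\Psi\circ\tilde T\circ\Phi$ has rank at most $3$.
\item With $\Phi$ valued in $M_m(\linf)$, everything is linear: $\Phi(e_j)=\sum_r P_{j,r}\otimes e_r$ with $P_{j,r}\ge0$ and $\sum_jP_{j,r}\le1$, and $T(e_j)=\Psi\bigl(\sum_rP_{j,r}\otimes\tilde T(e_r)\bigr)$.
\item Your scalar shift entries already force $\|\tilde T\|_{\cb}\ge1$, whatever $T$ is.
\item Worse, take $\tilde T(e_3)=c\,\mathrm{diag}(a_1,\dots,a_n)$ and $\Psi(X)=V^*XV$ with $V=v\otimes1$ a scalar contraction, where $v=\sum_pv_p\otimes e_p$. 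Then $\Psi(P\otimes\tilde T(e_3))=c\sum_p\langle v_p,Pv_p\rangle a_p$. If $1,a_1,\dots,a_n$ are linearly independent, recovering $T(e_j)=a_j$ forces $c\langle v_j,P_{j,3}v_j\rangle=1$ for every $j$. Summing over $j$ gives $n\le c\sum_j\|v_j\|^2\le c$.
\item For $a_j=\lambda(g_j)\in L(\mathbb F_n)$ and $n\ge3$, one has $\|T\|_{\cb}=2\sqrt{n-1}<n\le\|\tilde T\|_{\cb}$. So (a) fails precisely on the maps that witness non-injectivity.
\end{itemize}

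The free-group heuristic does not rescue this. The word $u^jvu^{-j}$ has degree $2j+1$, whereas CP pre- and post-composition of a map on $\linf$ only produces expressions of degree one in the three generators, with matrix coefficients. Your fallback, compressing a decomposition $(S_1,S_2)$ of $\tilde T$, still needs a CP map $\ell^\infty_n\to M_m(\linf)$ on the domain side. So it is the same problem.

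\paragraph{What is missing: a nonlinear use of the matrix levels.}
For a von Neumann algebra target, $\|u\|_{\cb}$ and $\|u\|_{\dec}$ are the minimal and maximal tensor norms of $\sum_ju(e_j)\otimes U_j$. For $\linf$ one may take $U_0=1$ and $U_1,U_2$ the free generators of $C^*(\mathbb F_2)$.
\begin{enumerate}
\item Under this identification, (H3) for every $M_k(M)$ says exactly that the identity on $M\otimes\mathrm{span}\{1,U_1,U_2\}$ is a unital complete isometry from the minimal into the maximal norm.
\item Extend it, by Arveson's theorem, to a unital CP map on $M\otimes_{\min}C^*(\mathbb F_2)$.
\item This map sends the unitaries $w\otimes1$ ($w\in\U(M)$) and $1\otimes U_i$ to unitaries. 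Hence they lie in its multiplicative domain, the map is a $*$-homomorphism extending the identity, and $M\otimes_{\min}C^*(\mathbb F_2)=M\otimes_{\max}C^*(\mathbb F_2)$.
\item Only at this stage do the words $U_1^jU_2U_1^{-j}$ legitimately enter. They generate a copy of $C^*(\mathbb F_n)$ admitting a conditional expectation, so the maximal norm restricts correctly. This gives $\|T\|_{\dec}=\|T\|_{\cb}$ for all $T:\ell^\infty_n\to M$.
\item Haagerup's theorem, or Kirchberg's characterization of the WEP, then yields injectivity.
\end{enumerate}
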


By the recently announced solution of the generator problem \cite{OAIgen}, every
von Neumann algebra with separable predual is singly generated, so the next statement is now a special
case of a more general result. We do not use \cite{OAIgen}. The next statement is classical, and we list
its sources after it.

\begin{proposition}\label{prop:K1}
Every injective von Neumann algebra with separable predual is singly generated.
\end{proposition}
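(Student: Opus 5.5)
The plan is to reduce to the two building blocks of injective algebras with separable predual, the abelian ones and the hyperfinite factors, using the type decomposition together with Connes' theorem. Let $M$ be injective with separable predual. By Connes' theorem, $M$ is hyperfinite: $M$ is generated by an increasing sequence of finite-dimensional $*$-subalgebras, or more precisely, its direct integral decomposition has injective factors as fibres. Singly generated von Neumann algebras have two classical sources here. The first is Wogen's theorem: every properly infinite von Neumann algebra with separable predual is singly generated. The second is the result (Pearcy, Sherman, Saitô; and for type $\mathrm{II}_1$ the work of Topping and Wogen) that every type $\mathrm{I}$ algebra with separable predual, and every hyperfinite type $\mathrm{II}_1$ algebra, is singly generated. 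I will assemble the statement from these.

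\textbf{Step 1.} Decompose $M=M_{\mathrm{I}}\oplus M_{\mathrm{II}_1}\oplus M_\infty$, where $M_\infty$ is properly infinite. By Wogen's theorem, $M_\infty$ is singly generated. $M_{\mathrm{I}}$ is a direct sum of algebras $L^\infty(X_n)\bar\otimes M_{k_n}$ with separable $L^\infty$. An abelian von Neumann algebra with separable predual is generated by one self-adjoint operator. The algebra $L^\infty(X)\otimes M_k$ is generated by $a\otimes 1$ together with a generator of $M_k$ (for instance the sum of a diagonal matrix with distinct entries and a cyclic shift); these two self-adjoint-type elements combine into one. Infinite summands of type $\mathrm{I}_\infty$ are properly infinite and are therefore already covered.

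\textbf{Step 2.} The hyperfinite $\mathrm{II}_1$ part is $M_{\mathrm{II}_1}\cong L^\infty(X)\bar\otimes R$, by Connes' uniqueness theorem applied fibrewise, or by the known classification of injective finite algebras with centre. $R$ is singly generated: $R=\overline{\bigcup M_{2^n}}$, and one writes explicit generators $h_1,h_2$, for example in the manner of Topping. For $L^\infty(X)\bar\otimes R$, take $h_1$ central with $W^*(h_1)=L^\infty(X)$ and spectrum in $[0,1]$. Then combine it with generators of $R$ through a standard trick, replacing $h_1$ by $h_1\otimes 1+$ (a small multiple of a self-adjoint generator of $R$) while keeping $h_2$ generating.

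\textbf{Step 3.} Combine the finitely many summands. If $M=\bigoplus_{j=1}^3 M_j$ with $M_j=W^*(h_1^{(j)},h_2^{(j)})$, rescale and translate so that the spectra of $h_1^{(j)}$ lie in disjoint intervals. Then the central projections onto the $M_j$ are spectral projections of $h_1=\sum_j h_1^{(j)}$, so $W^*(h_1,h_2)=M$.

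The main obstacle is Step 2, namely single generation of $L^\infty(X)\bar\otimes R$ in the non-factor case, since a direct-integral argument needs measurable choices of generators. I would first try to avoid it by citing the known theorem that every finite hyperfinite von Neumann algebra with separable predual is singly generated. If that is unavailable, I would reduce to the abelian generator $a$ of the centre and a single self-adjoint generator $b$ of $R$ (which exists by Topping), and check that $a\otimes 1$ and $1\otimes b$ generate the tensor product. They do, since the algebras they generate commute and together generate $L^\infty(X)\bar\otimes R$.
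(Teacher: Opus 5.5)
\textbf{Where the proposal breaks down.} Your architecture is a workable alternative to the paper's route: type decomposition, Wogen for the properly infinite part, Pearcy for the type I part, $M_{\mathrm{II}_1}\cong L^\infty(X)\mathbin{\bar\otimes}R$, then gluing central summands by separating spectra. The paper instead takes the central decomposition and uses Willig's theorem to reduce everything to injective \emph{factors}. However, the step you yourself single out as the main obstacle is not actually handled.

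First, $R$ has no single self-adjoint generator, since $W^*(b)$ is abelian whenever $b=b^*$. If you mean a non-self-adjoint generator $b=b_1+\mathrm{i}b_2$, then ``$a\otimes1$ and $1\otimes b$ generate'' only shows that $L^\infty(X)\mathbin{\bar\otimes}R$ is generated by two elements, i.e.\ by three self-adjoint ones. Reducing the number of generators is exactly the nontrivial part of any single-generation result.

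Second, the ``standard trick'' of replacing $h_1\otimes1$ by $h_1\otimes1+\varepsilon(1\otimes h_1')$ is unjustified. For instance, if $x,y$ are independent uniform random variables on $[0,1]$, then $x+\varepsilon y$ does not generate $W^*(x,y)=L^\infty([0,1]^2)$. So the sum does not recover $h_1\otimes1$ and $1\otimes h_1'$, and you give no other reason why it should generate everything together with $1\otimes h_2'$. Citing ``the known theorem that every finite hyperfinite algebra is singly generated'' instead amounts to citing the type $\mathrm{II}_1$ case of the very statement being proved.

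\textbf{The repair, and remaining points.} The fix is short and uses centrality. Write $R=W^*(h_1',h_2')$ with $h_1',h_2'$ self-adjoint, and $L^\infty(X)=W^*(a)$. Because $a\otimes1$ is central, $A=W^*(a\otimes1,\,1\otimes h_1')$ is an abelian von Neumann algebra with separable predual. Hence $A=W^*(k)$ for a single self-adjoint $k$; for instance $k=f(a\otimes1,1\otimes h_1')$ with $f$ a bounded Borel injection of the joint spectrum into $\mathbb{R}$. Then $W^*(k,1\otimes h_2')$ contains $L^\infty(X)\otimes1$ and $1\otimes R$, so it equals $L^\infty(X)\mathbin{\bar\otimes}R$.

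\begin{itemize}
\item The same argument is what justifies ``combine into one'' for $L^\infty(X)\mathbin{\bar\otimes}M_k$ in Step~1; alternatively, simply cite Pearcy for all of $M_{\mathrm I}$.
\item Step~3 only glues finitely many summands, whereas $M_{\mathrm I}$ may have countably many. That case needs the disjoint-interval trick with uniformly bounded generators.
\item $M_{\mathrm{II}_1}\cong L^\infty(X)\mathbin{\bar\otimes}R$ is a genuine theorem, not a formal consequence of Connes' result applied fibrewise, which would need a measurable field of isomorphisms.
\end{itemize}

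The paper sidesteps exactly this measurability issue, together with all the gluing, by invoking Willig's theorem on direct integrals. As a result it only needs the factor cases: Pearcy for type I, Wogen for properly infinite, and Connes plus Suzuki--Sait\^o for $R$.
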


\noindent\emph{Sources for Proposition~\ref{prop:K1}.}
Let $C$ be injective with separable predual, realized on a separable Hilbert space, and let
\[
C=\int^\oplus C(\lambda)\,d\mu(\lambda)
\] 
be its central decomposition.
\begin{enumerate}
\item By Willig \cite[Theorem~1]{Willig}, $C$ is singly generated if almost every $C(\lambda)$ is.
\item Almost every $C(\lambda)$ is injective by \cite[Proposition~6.5]{Connes}.
\item Injective factors are singly generated:
\begin{itemize}
\item factors of type I by Pearcy \cite{Pearcy};
\item properly infinite factors by Wogen \cite{Wogen};
\item injective factors of type II$_1$ are isomorphic to $R$ \cite[Theorem~7.1]{Connes}, and
$R=\overline\bigotimes(M_2,\mathrm{tr})$ is generated by a commuting sequence of type I$_2$ subfactors,
hence is singly generated by Suzuki and Sait\^o \cite[Lemma~4 and Theorem~1]{SuzukiSaito}.
\end{itemize}
\end{enumerate}

The statement is also mentioned as a known fact in \cite{Hayes}.

Pisier's theorem is one of the main ingredients; see {\cite[Remark~23.4]{Pis20}}. 

\begin{theorem}[Pisier]\label{thm:pisier}
Let $M\subseteq B(H)$ be a von Neumann algebra with a cyclic vector $\Omega$, and let $w_1,w_2\in M'$ be
unitaries with $\Wst(w_1,w_2)=M'$. If $M$ satisfies \textup{(H3)}, then there is a unital CP projection
of $B(H)$ onto $M$. In particular $M$ is injective.
\end{theorem}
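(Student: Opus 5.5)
The plan is to reduce (H3) for $M$ to a Kesten/Haagerup-type norm condition on the generating unitaries $w_1,w_2$ of $M'$. From that condition I would extract an $M'$-invariant mean on $B(H)$, and then average with respect to this mean to get the projection onto $M=(M')'$.

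\emph{Step 1 (dualizing (H3)).} For $T\colon\linf\to M$ with $T(e_i)=a_i$, I will use Haagerup's description of both norms. On the one hand, $\|T\|_{\dec}$ is the infimum of $\|\sum x_ix_i^*\|^{1/2}\|\sum y_i^*y_i\|^{1/2}$ over factorizations $a_i=x_iy_i$ with $x_i,y_i\in M$. On the other hand, by Wittstock's theorem, $\|T\|_{\cb}$ is the same infimum with $x_i,y_i$ allowed in $B(H)$. Dualizing, (H3) becomes a statement about $3$-term tensors $\sum_{i=1}^3 a_i\otimes b_i$: the norm computed through $M$ (a maximal/decomposable-type norm) is dominated by the spatial one. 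I would specialize to $b_i\in M'$ and use the cyclic vector $\Omega$ to realize the relevant functionals as vector functionals. Concretely, $\langle a\,b\,\Omega,\Omega\rangle$ for $a\in M$, $b\in M'$ identifies the pairing $M\times M'\to\C$ with a bilinear form of norm $\le 1$. The target inequality is
\[
\Big\|1\otimes\bar1+w_1\otimes\bar w_1+w_2\otimes\bar w_2\Big\|_{B(H)\otimes_{\min}\overline{B(H)}}=3 ,
\]
that is, the unitaries $1,w_1,w_2\in M'$ satisfy Haagerup's norm-$n$ criterion with $n=3$. This is exactly where having three points is used: the constant $1$ occupies one coordinate and the two generators occupy the others.

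\emph{Step 2 (invariant mean).} From the norm identity I obtain unit vectors $\xi_k\in H\otimes\bar H$ (Hilbert–Schmidt operators) with $\|w_j\xi_kw_j^*-\xi_k\|_2\to0$ for $j=1,2$. This follows by the usual uniform-convexity argument, since the norm equals the number of terms and one term is the identity. A weak$^*$ limit point $\varphi$ of the states $x\mapsto\mathrm{tr}(\xi_k^*x\xi_k)$ is then a state on $B(H)$ with $\varphi(w_jxw_j^*)=\varphi(x)$, and by the Powers–Størmer inequality this holds for all $x\in B(H)$. Invariance passes to the group generated by $w_1,w_2$, and then, by a weak$^*$ continuity/density argument, to all of $\U(M')$ because $\Wst(w_1,w_2)=M'$. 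This density step is delicate, since $\varphi$ is not normal. I would instead run the argument for $\varphi$ on the $C^*$-algebra level and apply the invariance inside the construction of Step 3, where the normal state $\omega_\Omega$ enters.

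\emph{Step 3 (projection).} For $x\in B(H)$, define $P(x)\in B(H)$ by the weak$^*$ limit of $\sum$-averages $\xi_k^*\,x\,\xi_k$ suitably normalized, or equivalently by the form
\[
\langle P(x)\,b\Omega,\,c\Omega\rangle=\lim_k\mathrm{tr}\big(\xi_k^*\,c^*xb\,\xi_k\big)\quad(b,c\in M'),
\]
which uses that $M'\Omega$ is dense. Approximate invariance under $w_1,w_2$ is what should give that $P(x)$ commutes with $w_1,w_2$, hence $P(x)\in\Wst(w_1,w_2)'=M$. Unitality and complete positivity are automatic from the formula, and $P|_M=\mathrm{id}$ follows because $M$ commutes with the $w_j$.

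The main obstacle is Step 1: making the duality between $\|\cdot\|_{\dec}$ and $\|\cdot\|_{\cb}$ on $\linf$ precise enough that (H3) for $M$ alone, without matrix levels, yields the norm-$3$ identity for $1,w_1,w_2$. I would first try the explicit $3$-point map $T(e_i)=\theta(w_i)$, with $w_0=1$ and $\theta$ built from $\Omega$ via the Tomita-type antilinear correspondence $M'\Omega\ni b\Omega\mapsto$ an element of $M$. I would compute $\|T\|_{\cb}\le1$ directly, and show that $\|T\|_{\dec}\le1$ forces the norm identity.
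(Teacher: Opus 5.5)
\textbf{Context.} The paper does not prove this statement; it quotes it from Pisier. So I assess your plan on its own terms.

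\textbf{The gap: the Step~1 target is false.} The identity $\|1\otimes\bar1+w_1\otimes\bar w_1+w_2\otimes\bar w_2\|=3$ does not follow from the hypotheses, and it fails even for injective $M$. Here is a counterexample.
\begin{itemize}
\item Let $\pi$ be an irreducible unitary representation of $\mathbb F_2=\langle a,b\rangle$ on a separable $K$ with $\pi\prec\lambda$ (for instance, any irreducible representation of $C^*_\lambda(\mathbb F_2)$).
\item Put $H=K\otimes K$, $M=1\otimes B(K)$, $w_1=\pi(a)\otimes1$ and $w_2=\pi(b)\otimes1$.
\item $M$ is injective, so it satisfies (H3).
\item For an orthonormal basis $(e_n)$ of $K$, the vector $\sum_n2^{-n}e_n\otimes e_n$ is cyclic for $M$.
\item $\Wst(w_1,w_2)=\pi(\mathbb F_2)''\otimes1=M'$.
\end{itemize}
Yet your operator is unitarily equivalent to $(\pi\otimes\bar\pi)(1+a+b)\otimes1$. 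By Fell absorption, $\pi\otimes\bar\pi\prec\lambda\otimes\bar\pi\simeq\lambda\otimes1$. Hence its norm is at most $\|\lambda(1+a+b)\|<3$.

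\textbf{Consequences for Steps~2 and~3.}
\begin{itemize}
\item The same example rules out Step~2. A state on $B(H)$ invariant under $\mathrm{Ad}\,\U(M')$ restricts to a tracial state on $M'$. Here $M'$ is properly infinite, so no such state exists, and no density argument can pass from $w_1,w_2$ to $\U(M')$.
\item The form $\lim_k\mathrm{tr}(\xi_k^*c^*xb\,\xi_k)$ in Step~3 is bounded by $\|b\Omega\|\,\|c\Omega\|$ only if the limit state is dominated by $\omega_\Omega$ on $M'$.
\item $P|_M=\mathrm{id}$ would further require that state to agree with $\omega_\Omega$ on the products $ac^*b$. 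Nothing in your construction gives either property.
\item The map $\theta$ is not well defined. $\Omega$ need not be separating for $M$, and even when it is, the Tomita correspondence yields operators affiliated with $M$, not elements of $M$.
\end{itemize}

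\textbf{The missing idea.} The cyclic vector is not for building a mean. It is what upgrades a contraction to a complete contraction. Your factorization descriptions of the two norms are the right starting point.
\begin{itemize}
\item Write $x_j=T(e_j)=a_jb_j$ with $a_j,b_j\in M$. For unitaries $v_j\in M'$, $\sum_jx_jv_j=\sum_ja_jv_jb_j$ has norm at most $\|\sum a_ja_j^*\|^{1/2}\|\sum b_j^*b_j\|^{1/2}$. Hence $\|x_0+x_1w_1+x_2w_2\|\le\|T\|_{\dec}$.
\item On the other side, $\|T\|_{\cb}=\|x_0\otimes1+x_1\otimes V_1+x_2\otimes V_2\|_{M\otimes_{\min}C^*(\mathbb F_2)}$, where $V_1,V_2$ are free unitary generators.
\end{itemize}
Thus (H3) says exactly that the unital map
\[
\Phi\colon x_0\otimes1+x_1\otimes V_1+x_2\otimes V_2\longmapsto x_0+x_1w_1+x_2w_2
\]
is contractive. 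Since the $w_j$ commute with $M$, $\Phi$ is an $M$-bimodule map. Because $M$ has a cyclic vector, R.~R.~Smith's lemma gives $\|\Phi\|_{\cb}=\|\Phi\|\le1$. This is precisely where $\Omega$ substitutes for (H3) at higher matrix levels.

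\textbf{Finishing the argument.}
\begin{itemize}
\item Extend $\Phi$ to a unital CP map on $M\otimes_{\min}C^*(\mathbb F_2)$.
\item Its multiplicative domain contains $M\otimes1$ and $1\otimes V_j$, so the extension is a $*$-homomorphism.
\item Extend it once more, by Arveson's theorem, to a unital CP map $\Sigma$ on $B(H)\otimes_{\min}C^*(\mathbb F_2)$. The unitaries $1\otimes V_j$ stay in its multiplicative domain.
\item Hence $P(X)=\Sigma(X\otimes1)$ commutes with $w_1,w_2$. So $P(X)\in\{w_1,w_2\}'=M$, and $P$ is the required unital CP projection onto $M$.
\end{itemize}
No trace or invariant state on $M'$ enters. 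This is why the statement needs no finiteness assumption on $M'$, which your invariant-mean route implicitly requires.
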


\begin{corollary}\label{cor:single}
Let $B$ be a singly generated von Neumann algebra with a faithful normal state $\varphi$. If $B$
satisfies \textup{(H3)}, then $B$ is injective.
\end{corollary}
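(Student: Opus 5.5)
The plan is to put $B$ in a standard form and apply Theorem~\ref{thm:pisier}, with the roles of $M$ and $M'$ arranged so that the single generation of $B$ supplies the two unitaries in the commutant. Let $(\pi_\varphi,H_\varphi,\Omega)$ be the GNS representation of the faithful normal state $\varphi$. Since $\varphi$ is faithful and normal, $\pi_\varphi$ is a faithful normal representation, and $\Omega$ is both cyclic and separating for $B\cong\pi_\varphi(B)$. By Tomita--Takesaki theory the modular conjugation $J$ satisfies $JBJ=B'$ on $H_\varphi$. Consequently $\Omega$ is also cyclic for $B'$, and $B'$ is anti-isomorphic to $B$ via $x\mapsto Jx^*J$.

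I would apply Theorem~\ref{thm:pisier} with $M=B'$, so that $M'=B$. Write $B=\Wst(z)=\Wst(h_1,h_2)$ with $h_1,h_2$ self-adjoint. To obtain unitary generators, rescale $h_j$ so that $\|h_j\|<\pi$ and set $w_j=e^{\iu h_j}$. Then $h_j$ is recovered from $w_j$ by the Borel functional calculus (a branch of $\log$ that is continuous on the spectrum), so $\Wst(w_1,w_2)=\Wst(h_1,h_2)=B=M'$. Thus $M=B'$ has the cyclic vector $\Omega$, and $M'$ is generated by the two unitaries $w_1,w_2$.

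The remaining hypothesis is that $M=B'$ satisfies (H3). This is the step I expect to require care. The algebra $B'$ is isomorphic to the opposite algebra $B^{op}$ via $x\mapsto Jx^*J$, so it suffices to show that (H3) passes from $B$ to $B^{op}$. Given $T:\linf\to B^{op}$, regard $T$ as a map into $B$ through the identity linear map $B\to B^{op}$. That map is not CP, so instead I would use transposition. A map $\Phi:A\to B$ is CP if and only if $\Phi^{op}:A^{op}\to B^{op}$ is CP, because $[\Phi(a_{ji})]$ is the transpose of $[\Phi(a_{ij})]$ and $M_n(B^{op})\cong M_n(B)^{op}$ via transposition. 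Moreover $\linf$ is commutative, so $(\linf)^{op}=\linf$. Under this transpose correspondence, the $2\times 2$ decomposition matrix for $T$ corresponds to one for $T$ regarded into $B$, with the off-diagonal corner replaced by $x\mapsto T(x^*)^*$. The norms of the diagonal CP maps are unchanged, so $\|T\|_{\dec}$ computed in $B^{op}$ equals $\|T\|_{\dec}$ computed in $B$. Similarly, $\|T\|_{\cb}$ is preserved, since the norm of a matrix over $B^{op}$ equals the norm of its transpose over $B$ (matrix transpose combined with passage to the opposite algebra is a $*$-isomorphism $M_n(B^{op})\cong M_n(B)^{op}$). Hence (H3) for $B$ implies (H3) for $B^{op}\cong B'$.

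With all hypotheses verified, Theorem~\ref{thm:pisier} shows that $M=B'$ is injective. Injectivity passes to the commutant: $B'$ is injective if and only if $B''=B$ is. One can see this either by the classical commutant theorem, or more directly here because $B\cong (B')^{op}$ and injectivity is preserved under passage to the opposite algebra, by the same transpose argument. Therefore $B$ is injective.

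An alternative would avoid opposite algebras altogether by applying Theorem~\ref{thm:pisier} with $M=B$. That route would need generators of $B'$, which are available because $B'=JBJ$ is generated by $Jw_1J$ and $Jw_2J$. Those elements are unitary, and $\Omega$ is cyclic for $B$. This is in fact cleaner: take $M=B$ on $H_\varphi$, $w_j'=Jw_jJ\in B'$, and $\Wst(w_1',w_2')=J\,\Wst(w_1,w_2)\,J=B'$. The theorem then applies directly to $B$ with its (H3) hypothesis, giving injectivity of $B$ without the opposite-algebra step. I would adopt this second route, and the main point to verify is only that conjugation by $J$ maps $\Wst(w_1,w_2)$ onto $\Wst(Jw_1J,Jw_2J)$. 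This holds because $x\mapsto JxJ$ is a conjugate-linear, weak-operator-continuous $*$-anti-isomorphism from $B$ onto $B'$.
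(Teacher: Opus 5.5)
Your adopted second route is correct and is essentially the paper's proof: standard form $L^2(B,\varphi)$ with $\Omega$ cyclic for $M=B$, $B'=JBJ$, and unitary generators of $B'$ obtained from exponentials of small multiples of self-adjoint generators; you exponentiate before conjugating by $J$ and the paper after, which makes no difference, and your first detour through $B^{op}$ is not needed. One small slip: $x\mapsto JxJ$ is a conjugate-linear $*$-isomorphism (it preserves products since $J^2=1$), not an anti-isomorphism; the anti-multiplicative map is $x\mapsto Jx^*J$, and this does not affect your argument.
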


\begin{proof}
Represent $B$ on $H=L^2(B,\varphi)$. Then the implementing vector $\xi_\varphi$ is cyclic and separating. Denote by $J_\varphi$ the modular conjugation. 
Hence, if $B=\Wst(z)$, then $B'=\Wst(J_\varphi zJ_\varphi)$.

Write $J_\varphi zJ_\varphi=h_1+\iu h_2$ with $h_1,h_2$ self-adjoint, choose $\varepsilon>0$ with
$\varepsilon\|h_1\|,\varepsilon\|h_2\|<\pi$, and put $w_r=e^{\iu\varepsilon h_r}$ for $r=1,2$.
The spectrum of $w_r$ lies in $\{e^{\iu\theta}:|\theta|<\pi\}$, on which the principal logarithm is
continuous. Hence $h_r=(\iu\varepsilon)^{-1}\operatorname{Log}(w_r)\in\Wst(w_r)$. Therefore
\[
\Wst(w_1,w_2)=\Wst(h_1,h_2)=\Wst(J_\varphi zJ_\varphi)=B'.
\]
By Theorem~\ref{thm:pisier}, $B$ is
injective.
\end{proof}

\section{Key lemmas}\label{sec:new}

\begin{lemma}\label{lem:heredity}
Let $A$ be a $C^*$-algebra satisfying \textup{(H3)}, and let $B\subseteq A$ be a $C^*$-subalgebra.
Suppose there is a CP contraction $E\colon A\to B$ with $E(x)=x$ for $x\in B$. Then
$B$ satisfies \textup{(H3)}.
\end{lemma}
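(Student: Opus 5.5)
Given a linear map $T\colon\linf\to B$, the plan is to view it as a map into $A$, use (H3) there, and push the resulting decomposition back into $B$ with $E$. Let $\iota\colon B\to A$ be the inclusion, which is a $*$-homomorphism and hence CP, and put $T_A=\iota\circ T\colon\linf\to A$. The basic inequality to establish is
\[
\|T\|_{\dec}\le\|T_A\|_{\dec}=\|T_A\|_{\cb}=\|T\|_{\cb},
\]
where $\|T\|_{\dec}$ is computed with $B$ as the range and $\|T_A\|_{\dec}$ with $A$ as the range. Combined with (D1) in $B$, this gives $\|T\|_{\dec}=\|T\|_{\cb}$, which is exactly (H3) for $B$.

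The equality $\|T_A\|_{\cb}=\|T\|_{\cb}$ holds because $\iota$ is a complete isometry: each $M_n(B)\subseteq M_n(A)$ is an isometric inclusion of $C^*$-algebras. The middle equality $\|T_A\|_{\dec}=\|T_A\|_{\cb}$ is the hypothesis (H3) on $A$.

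For the inequality $\|T\|_{\dec}\le\|T_A\|_{\dec}$, I use $E\circ T_A=T$, which holds because $E$ fixes $B$ pointwise. Then (D2) gives
\[
\|T\|_{\dec}=\|E\circ T_A\|_{\dec}\le\|E\|_{\dec}\,\|T_A\|_{\dec},
\]
and (D3) gives $\|E\|_{\dec}=\|E\|\le1$. Both norms are taken with the appropriate ranges: $E$ maps into $B$ and $T_A$ maps into $A$, so the composite's decomposable norm is computed in $B$, as (H3) for $B$ requires.

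If I wanted to avoid (D2), I would verify the step directly. Take CP maps $S_1,S_2\colon\linf\to A$ witnessing $\|T_A\|_{\dec}$ up to $\varepsilon$. Applying the CP map $E\otimes\mathrm{id}_{M_2}$ to the $2\times2$ block map gives CP maps $E S_1$ and $E S_2$ into $B$ with the off-diagonal entries $E T_A=T$ and $E(T_A(x^*)^*)=T(x^*)^*$. Moreover $\|ES_i\|\le\|S_i\|$.

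No step is a real obstacle. The one point needing care is the range of each decomposable norm, and the chain above keeps it straight: the norm of $T_A$ is taken in $A$, and the norm of $T=E\circ T_A$ is taken in $B$.
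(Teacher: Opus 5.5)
Your proof is correct and is essentially the paper's argument: you factor $T=E\circ(\iota\circ T)$, use (D2) and (D3) to get $\|T\|_{\dec}\le\|\iota\circ T\|_{\dec}$, then (H3) in $A$ and the complete isometry of $\iota$, and you finish with (D1). Your optional direct check via $E\otimes\mathrm{id}_{M_2}$ is also valid, since $T(x^*)^*$ already lies in $B$ and is therefore fixed by $E$.
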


\begin{proof}
Let $T\colon\linf\to B$ be linear, and let $\iota\colon B\to A$ be the inclusion. Since $\iota$ and $E$ are
CP contractions, (D3) gives $\|\iota\|_{\dec}\le1$ and $\|E\|_{\dec}\le1$. Since $T=E\circ\iota\circ T$,
(D2) and (H3) for $A$ give
\[
\|T\|_{\dec,B}=\|E\circ(\iota\circ T)\|_{\dec,B}\le\|\iota\circ T\|_{\dec,A}=\|\iota\circ T\|_{\cb}=\|T\|_{\cb}.
\]
The reverse inequality is (D1).
\end{proof}

\begin{corollary}\label{cor:heredity}
Let $M$ be a von Neumann algebra satisfying \textup{(H3)}. Then \textup{(H3)} holds for each of the
following algebras:
\begin{enumerate}
\item[(a)] every corner $fMf$, where $f\in M$ is a projection;
\item[(b)] if $M$ has a faithful normal tracial state $\tau$, every von Neumann subalgebra $N\subseteq M$;
\item[(c)] every von Neumann algebra $*$-isomorphic to $M$.
\end{enumerate}
\end{corollary}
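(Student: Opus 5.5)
Each of the three parts follows from Lemma~\ref{lem:heredity}: we exhibit a $C^*$-subalgebra of $M$ (or a copy of one) together with a CP contraction from $M$ onto it that restricts to the identity on the subalgebra.

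For (a), let $f\in M$ be a projection. Put $B=fMf$ and define $E\colon M\to fMf$ by $E(x)=fxf$.
\begin{itemize}
\item $E$ is CP, because it is a compression $x\mapsto f^*xf$.
\item $E$ is contractive, since $\|fxf\|\le\|x\|$.
\item $E(x)=x$ for $x\in fMf$.
\end{itemize}
The subalgebra $fMf$ is a $C^*$-subalgebra of $M$, although its unit is $f$ rather than $1$. This does not matter, because Lemma~\ref{lem:heredity} only asks for a $C^*$-subalgebra and a contractive CP map fixing it. Lemma~\ref{lem:heredity} therefore gives (H3) for $fMf$.

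For (b), let $\tau$ be a faithful normal tracial state on $M$ and let $N\subseteq M$ be a von Neumann subalgebra, taken to share the unit of $M$. We use the $\tau$-preserving conditional expectation $E_N\colon M\to N$. It exists because, in $L^2(M,\tau)$, the orthogonal projection onto $L^2(N,\tau)$ maps $M$ into $N$. This is the standard trace-based construction: normality of $\tau$ together with the Radon--Nikodym argument, or equivalently Takesaki's criterion applied to the modular group of $\tau$, which is trivial. The map $E_N$ is a unital CP projection of norm one onto $N$, so Lemma~\ref{lem:heredity} applies.

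If $N$ is allowed a different unit $e$, first pass to $eMe$ using (a). The restriction $\tau(e)^{-1}\tau|_{eMe}$ is again a faithful normal tracial state, so the argument above applies inside $eMe$. The only nontrivial input in this part is the existence of $E_N$, which we take as known.

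For (c), let $\pi\colon M\to N$ be a $*$-isomorphism and let $T\colon\linf\to N$ be linear. The maps $\pi$ and $\pi^{-1}$ are both CP contractions, and hence decomposable with decomposable norm at most one by (D3). Applying (D2) twice gives $\|T\|_{\dec}=\|\pi^{-1}\circ T\|_{\dec}$. Complete isometry of $\pi$ gives $\|T\|_{\cb}=\|\pi^{-1}\circ T\|_{\cb}$. Since (H3) holds for $M$, these two norms are equal, which proves (c).

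Alternatively, (c) follows from Lemma~\ref{lem:heredity} applied to the isomorphic copy $A=N$ with $E=\mathrm{id}$, after transporting (H3) across $\pi$ as just done.
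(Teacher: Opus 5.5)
Your proof is correct and follows the intended route. The paper states this corollary without proof as an immediate consequence of Lemma~\ref{lem:heredity}, via the compression $x\mapsto fxf$ for (a), the $\tau$-preserving conditional expectation (cited from \cite[Proposition V.2.36]{Tak}) for (b), and transport of structure through (D2)--(D3) for (c), exactly as you do. Your closing ``alternative'' for (c) adds nothing, since applying Lemma~\ref{lem:heredity} with $E=\mathrm{id}$ already presupposes (H3) for $N$, but your direct argument for (c) stands on its own.
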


Let $N$ be a von Neumann algebra on a Hilbert space $H$, containing a system of $4\times4$ matrix
units $\{e_{ij}\}_{i,j=1}^4$. 
Put $Q=e_{11}Ne_{11}$, a von Neumann algebra on $e_{11}H$ with unit $e_{11}$. For a von
Neumann subalgebra $C\subseteq Q$ with $e_{11}\in C$, define
\[
M_4(C):=\left\{\sum_{i,j}e_{i1}c_{ij}e_{1j}: c_{ij}\in C\right\}\subseteq N.
\]

\begin{lemma}\label{lem:matrix}
In the situation above, let $y_1,y_2,y_3,y_4\in Q$ be self-adjoint, some of which may be $0$. Suppose
that $C=\Wst(y_1,y_2,y_3,y_4)$, the von Neumann algebra on $e_{11}H$ generated by $y_1,\dots,y_4$ and
the unit $e_{11}$ of $Q$. Fix four distinct real numbers $\lambda_1,\lambda_2,\lambda_3,\lambda_4$; for instance
$\lambda_i=i$. Put
\[
D=\sum_{i=1}^4\lambda_i\,e_{ii},\qquad
Y=\sum_{i=1}^4e_{i1}y_ie_{1i}+\sum_{i=1}^3\bigl(e_{i,i+1}+e_{i+1,i}\bigr).
\]
Then $D$ and $Y$ are self-adjoint, and
\[
M_4(C)=\Wst(D,Y)=\Wst(D+\iu Y).
\]
In particular $M_4(C)$ is singly generated.
\end{lemma}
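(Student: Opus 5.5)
Self-adjointness is immediate: $D$ is a real combination of the self-adjoint projections $e_{ii}$. Each $e_{i1}y_ie_{1i}$ is self-adjoint because $y_i^*=y_i$ and $(e_{i1})^*=e_{1i}$, and the off-diagonal part $\sum(e_{i,i+1}+e_{i+1,i})$ is self-adjoint by construction. Writing $D+\iu Y$ with $D,Y$ self-adjoint, the convention recalled in the Preliminaries gives $\Wst(D,Y)=\Wst(D+\iu Y)$. So the substance is the equality $M_4(C)=\Wst(D,Y)$, and I would prove the two inclusions separately.

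For ``$\supseteq$'': first check that $M_4(C)$ is a von Neumann algebra with unit $\sum_i e_{ii}$. The map $[c_{ij}]\mapsto\sum e_{i1}c_{ij}e_{1j}$ is a normal $*$-isomorphism from the matrix algebra over $C$ onto $M_4(C)$, since $e_{1j}e_{k1}=\delta_{jk}e_{11}$ and $e_{11}$ is the unit of $C$. Under this identification:
\begin{itemize}
\item $D$ corresponds to $\mathrm{diag}(\lambda_1e_{11},\dots,\lambda_4e_{11})$;
\item $Y$ corresponds to the matrix with diagonal entries $y_i$ and entries $e_{11}$ on the first off-diagonals.
\end{itemize}
Both lie in $M_4(C)$. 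One subtlety needs care: $\Wst(\cdot)$ includes the unit of $N$, whereas $M_4(C)$ has unit $\sum e_{ii}$. I would interpret $\Wst(D,Y)$ as generated inside $e N e$ with $e=\sum e_{ii}$, or simply note that if $e\neq1$ the two sides differ only by $\C(1-e)$. Presumably the paper intends $e=1$ or the unit $e$; I would state this convention explicitly.

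For ``$\subseteq$'', the key step is recovering the matrix units from $D$ and $Y$:
\begin{enumerate}
\item Since the $\lambda_i$ are distinct, each $e_{ii}$ is a polynomial in $D$, for instance by Lagrange interpolation $e_{ii}=\prod_{j\ne i}(D-\lambda_j e)/(\lambda_i-\lambda_j)$. Hence each $e_{ii}\in\Wst(D,Y)$.
\item Compressing gives $e_{ii}Ye_{i+1,i+1}=e_{i,i+1}$, so these superdiagonal units lie in $\Wst(D,Y)$.
\item Products then give every $e_{ij}$, for example $e_{13}=e_{12}e_{23}$, and adjoints give the lower ones.
\item Similarly $e_{ii}Ye_{ii}=e_{i1}y_ie_{1i}$, so $e_{1i}(e_{i1}y_ie_{1i})e_{i1}=y_i$ (viewed in $e_{11}Ne_{11}$) lies in $\Wst(D,Y)$.
\end{enumerate}
Thus $\Wst(D,Y)\cap e_{11}Ne_{11}$ is a von Neumann algebra on $e_{11}H$ containing $e_{11}$ and all $y_i$, hence contains $C$. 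Finally $e_{i1}ce_{1j}\in\Wst(D,Y)$ for every $c\in C$, which gives $M_4(C)\subseteq\Wst(D,Y)$.

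The main obstacle is just this bookkeeping about units and the corner algebra; the algebra itself is routine. The "in particular" clause then follows by taking $z=D+\iu Y$.
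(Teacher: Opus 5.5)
Your proof is correct and follows the paper's argument essentially step for step. Both recover each $e_{ii}$ from $D$ by Lagrange interpolation, obtain $e_{i,i+1}=e_{ii}Ye_{i+1,i+1}$ and $y_i=e_{1i}(e_{ii}Ye_{ii})e_{i1}$ by compression, and so get $C$ and then all of $M_4(C)$. Your caveat about units is also well taken: the paper implicitly assumes $\sum_i e_{ii}=1$, as in its application where $B_t$ contains the unit of $N$, and under that assumption your convention and the paper's coincide.
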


\begin{proof}
Let $\mathcal A=\Wst(D,Y)$.
Then $\mathcal A\subseteq M_4(C)$.

We will show the reverse inclusion.
Note that 
\[
e_{ii}=\prod_{j\ne i}\frac{D-\lambda_j}{\lambda_i-\lambda_j}\in\mathcal A\qquad(1\le i\le4).
\]
Fix $1\le i\le3$. We compute $e_{ii}Ye_{i+1,i+1}$ term by term.
\begin{itemize}
\item \emph{Diagonal terms.} For each $k$,
\[
e_{ii}(e_{k1}y_ke_{1k})e_{i+1,i+1}=\delta_{ik}\delta_{k,i+1}\,e_{i1}y_ke_{1,i+1}=0,
\]
since $k$ cannot equal both $i$ and $i+1$.
\item \emph{Off-diagonal terms.} For each $k$,
\[
e_{ii}e_{k,k+1}e_{i+1,i+1}=\delta_{ik}\,e_{i,i+1},\qquad
e_{ii}e_{k+1,k}e_{i+1,i+1}=\delta_{i,k+1}\delta_{k,i+1}\,e_{i,i+1}=0,
\]
since $k+1=i$ and $k=i+1$ cannot hold simultaneously.
\end{itemize}
Hence $e_{ii}Ye_{i+1,i+1}=e_{i,i+1}\in\mathcal A$. 
Since every matrix unit is a product of these and their
adjoints, all $e_{ij}$ belong to $\mathcal A$.
Moreover,
for each $i$ we have $e_{ii}Ye_{ii}=e_{i1}y_ie_{1i}$. Hence
\[
y_i=e_{11}y_ie_{11}=e_{1i}(e_{ii}Ye_{ii})e_{i1}\in\mathcal A\qquad(1\le i\le4).
\]
Hence $\mathcal A$ contains
$C=\Wst(y_1,\dots,y_4)$.
Therefore, $M_4(C)=\mathcal A$.
\end{proof}


\section{The finite case}\label{sec:finite}

\begin{proposition}\label{prop:finite}
Let $N$ be a von Neumann algebra of type II$_1$ with a faithful normal tracial state $\tau$. If $N$
satisfies \textup{(H3)}, then $N$ is injective.
\end{proposition}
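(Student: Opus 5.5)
By Lemma~\ref{lem:locality}, it suffices to show that $\Wst(F)$ is injective for every finite set $F=\{u_1,\dots,u_m\}\subseteq\U(N)$. The plan is to embed $\Wst(F)$, up to isomorphism, into a singly generated von Neumann subalgebra of a corner of $N$. We then apply Corollary~\ref{cor:single}, and pass injectivity back down using the trace.

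\emph{Step 1: reduce the number of generators.} Since $N$ is of type II$_1$, it contains a system of $4\times4$ matrix units $\{e_{ij}\}$ with $\sum e_{ii}=1$. Put $Q=e_{11}Ne_{11}$, so that $N\cong M_4(Q)$ and $Q$ is again of type II$_1$ with the faithful normal tracial state $4\tau|_Q$. The given unitaries correspond to $4\times4$ matrices over $Q$. Their entries give finitely many elements of $Q$, hence finitely many self-adjoint elements $x_1,\dots,x_p\in Q$. The difficulty is that Lemma~\ref{lem:matrix} only allows four self-adjoint generators $y_1,\dots,y_4$. To handle this, I would iterate the matrix-unit trick inside $Q$. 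Since $Q$ is II$_1$, we can choose matrix units of size $n$ in $Q$ and pack many self-adjoint elements of a smaller corner $q Q q$ into a bounded number of self-adjoint elements of $Q$. Concretely, $\sum_k e'_{k1}x_ke'_{1k}$ is a diagonal element, and a tridiagonal shift gives off-diagonal elements; a single self-adjoint element together with a diagonal of distinct scalars recovers everything, exactly as in the proof of Lemma~\ref{lem:matrix}.

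Equivalently, and more simply, we can work directly in $N$. Choose matrix units $\{f_{ij}\}_{i,j=1}^n$ in $N$ with $n\ge p$. Transport the generators $x_k$ into the corner $f_{11}Nf_{11}$; this is possible because $N\cong M_n(f_{11}Nf_{11})$, so the entries of the $u_k$ can be taken in $f_{11}Nf_{11}$. Let $C=\Wst(x_1,\dots,x_p)\subseteq f_{11}Nf_{11}$. The $n\times n$ analogue of Lemma~\ref{lem:matrix} shows that $M_n(C)$ is generated by $D=\sum\lambda_i f_{ii}$ and $Y=\sum_i f_{i1}x_if_{1i}+\sum_i(f_{i,i+1}+f_{i+1,i})$. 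Hence $M_n(C)$ is singly generated. The proof of Lemma~\ref{lem:matrix} works verbatim for any $n$, so I would use it in that form.

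\emph{Step 2: apply (H3) and Pisier.} The algebra $B:=M_n(C)$ is a von Neumann subalgebra of $N$ containing $1$. By Corollary~\ref{cor:heredity}(b), using the $\tau$-preserving conditional expectation, $B$ satisfies (H3). It is singly generated, and $\tau|_B$ is a faithful normal tracial state. Corollary~\ref{cor:single} therefore shows that $B$ is injective. Then $C\cong f_{11}Bf_{11}$ is injective, as a corner of an injective algebra, and so is $M_n(C)$. Finally, $\Wst(F)$ sits inside $M_n(C)$, up to the identification of $N$ with matrices over $f_{11}Nf_{11}$, once $C$ is taken to contain all matrix entries of the $u_k$. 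Because $N$ is finite, the trace-preserving conditional expectation from $M_n(C)$ onto $\Wst(F)$ is a CP projection, so $\Wst(F)$ is injective. Lemma~\ref{lem:locality} then concludes the proof.

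\emph{Main obstacle.} The delicate point is the bookkeeping in Step 1. The unitaries must be realized as matrices over a corner $f_{11}Nf_{11}$ with $n\ge p$ rows, while $p$, the number of self-adjoint entries, itself grows with $n$: it is roughly $2mn^2$. This circularity has to be broken. I would try to break it by iteration: first write $N\cong M_4(Q)$, collecting $2\cdot16m$ self-adjoint entries in $Q$. Then, since $Q$ is II$_1$, I would choose inside $Q$ a projection $q$ with $Q\cong M_r(qQq)$ and pack these entries as diagonal blocks. The aim is to ensure that the generators of $C$ come from a strictly smaller corner than the one whose matrix size is being used, so that the packing is legitimate.
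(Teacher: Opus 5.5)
\textbf{There is a genuine gap: the circularity you flag at the end is not broken by the iteration you propose.} If you pass to a corner $qQq$ with $Q\cong M_r(qQq)$, you cannot place the $32m$ self-adjoint entries $x_k$ as diagonal blocks. They are elements of $Q$, not of $qQq$. To write them over the new matrix units, each one must be expanded into $r^2$ entries in $qQq$. You then have $32mr^2$ self-adjoint elements competing for $r$ diagonal slots of a generator as in Lemma~\ref{lem:matrix}, or roughly $r^2$ slots if you also use the off-diagonal entries of $Y$.

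The same count holds at every level. With $n\times n$ matrix units, the $m$ unitaries contribute about $2mn^2$ self-adjoint entries, while any single element of $M_n(C)$ has only $2n^2$ self-adjoint entries in total. Passing to a further corner only rescales both sides by $r^2$, so no choice of sizes and no iteration closes the gap when $m\ge2$. Matrix-unit bookkeeping alone cannot make an algebra containing $\Wst(F)$ singly generated. Some external source of single generation is needed.

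\textbf{The missing idea is to feed the injectivity you are proving back into the generator count.} This is the paper's argument:
\begin{enumerate}
\item Fix $4\times4$ matrix units once and list the $L=32m$ self-adjoint entries $s_1,\dots,s_L\in Q$.
\item Adjoin them one at a time, setting $C_t=\Wst(s_1,\dots,s_t)$ and $B_t=M_4(C_t)$.
\item By induction $B_{t-1}$ is injective, so its corner $C_{t-1}$ is injective with separable predual.
\item By the classical Proposition~\ref{prop:K1}, $C_{t-1}$ is singly generated, i.e.\ generated by two self-adjoint elements $y_1,y_2$.
\item Then $C_t=\Wst(y_1,y_2,s_t)$ needs only three of the four slots, and Lemma~\ref{lem:matrix} makes $B_t$ singly generated.
\item Your Step~2 (heredity via the trace, Corollary~\ref{cor:heredity}(b), then Corollary~\ref{cor:single}) makes $B_t$ injective.
\end{enumerate}
After $L$ steps $\Wst(F)\subseteq B_L$. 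The trace-preserving conditional expectation and Lemma~\ref{lem:locality} finish the proof exactly as you describe.

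Your Step~2 and the final reduction are correct and match the paper. What is missing is this inductive use of Proposition~\ref{prop:K1}, which keeps the matrix size fixed at $4$ and the number of generators at most $3$ at every stage.
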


\begin{proof}
We may regard $N$ as acting on some Hilbert space $H$. 
By \cite[Proposition V.1.35]{Tak},
take a system $\{e_{ij}\}_{i,j=1}^4$
of $4\times4$ matrix units in $N$. 
Put $Q=e_{11}Ne_{11}$.

By Lemma~\ref{lem:locality} it suffices to prove that $\Wst(u_1,\dots,u_m)$ is injective for arbitrary
$u_1,\dots,u_m\in\U(N)$. 
For $1\le n\le m$ and $1\le i,j\le4$, the element
$e_{1i}u_ne_{j1}$ lies in $Q$. Let $s_1,\dots,s_L$ ($L=32m$) be an enumeration of the self-adjoint
elements
\[
\mathrm{Re}(e_{1i}u_ne_{j1})\quad\text{and}\quad\mathrm{Im}(e_{1i}u_ne_{j1}),
\]
all of which lie in $Q$. For $0\le t\le L$ define
\[
C_t=\Wst(s_1,\dots,s_t)\subseteq Q,\qquad B_t=M_4(C_t)\subseteq N .
\]
Here, $\Wst$ is taken in $Q$, i.e.\ it includes the unit $e_{11}$ of $Q$. In particular
$C_0=\C e_{11}$.
Note that $B_t$ is a von Neumann subalgebra of $N$ containing the unit of $N$, and
$e_{11}B_te_{11}=C_t$. 
Moreover, each $C_t$ has a separable predual.

Since $\tau(e_{11})=1/4$,
$\tau_t:=4\,\tau|_{C_t}$ is
a faithful normal tracial state on $C_t$.
Let
$\pi_t$ denote the GNS representation of $C_t$ on $L^2(C_t,\tau_t)$. 

Now we claim that 
$B_t$ (equivalently, $C_t$) is injective for every $0\le t\le L$.
We argue by induction on $t$.

\emph{Base case.} Since $C_0=\C e_{11}$, $B_0=\mathrm{span}\{e_{ij}\}\cong M_4(\C)$. A finite-dimensional
von Neumann algebra is injective.

\emph{Inductive step.} Let $1\le t\le L$ and suppose that $B_{t-1}$ is injective.

\smallskip
(a) \emph{$C_{t-1}$ is injective.} Let $P:B(H)\to B_{t-1}$ be a projection of norm one. Identify
$B(e_{11}H)$ with $e_{11}B(H)e_{11}$. Then $x\mapsto e_{11}P(x)e_{11}$ maps $B(e_{11}H)$ into
$e_{11}B_{t-1}e_{11}=C_{t-1}$. It has norm at most one and fixes $C_{t-1}$, because $P$ fixes
$C_{t-1}\subseteq B_{t-1}$. Hence $C_{t-1}$ is injective.

\smallskip
(b) \emph{$C_{t-1}$ is generated by two self-adjoint elements.}
By (a), $\pi_{t-1}(C_{t-1})$ is an injective von Neumann algebra on a separable Hilbert space.
By Proposition~\ref{prop:K1}, $\pi_{t-1}(C_{t-1})$ is singly generated. Since single generation is invariant
under the $*$-isomorphism $\pi_{t-1}$, we get $C_{t-1}=\Wst(z)$ for some $z\in C_{t-1}$. Put
$y_1=\mathrm{Re}\,z$ and $y_2=\mathrm{Im}\,z$. Then $C_{t-1}=\Wst(y_1,y_2)$ (generated in $Q$), i.e.\
$C_{t-1}$ is generated by the two self-adjoint elements $y_1,y_2$.

\smallskip
(c) \emph{$B_t$ is singly generated.}
By (b), $C_t=\Wst(y_1,y_2,s_t)$, which is generated in $Q$ by three self-adjoint
elements.
Lemma~\ref{lem:matrix} with $y_3=s_t$ and $y_4=0$ shows that $B_t=M_4(C_t)$ is singly generated.

\smallskip
(d) \emph{$B_t$ satisfies \textup{(H3)}.} $B_t$ is a von Neumann subalgebra of $N$, and $N$ has the
faithful normal tracial state $\tau$. By Corollary~\ref{cor:heredity}(b), $B_t$ satisfies (H3).

\smallskip
(e) \emph{$B_t$ is injective.} $\tau|_{B_t}$ is a faithful normal tracial state on $B_t$. By (c), (d)
and Corollary~\ref{cor:single}, $B_t$ is injective. This proves the claim.

\smallskip
By the claim, $B_L$ is injective. Let $P_L:B(H)\to B_L$ be a projection of norm one.
Since
$u_n=\sum_{i,j}e_{i1}(e_{1i}u_ne_{j1})e_{1j}$
and
$e_{1i}u_ne_{j1}\in C_L$,
we have $\Wst(u_1,\dots,u_m)\subseteq B_L$.
By \cite[Proposition V.2.36]{Tak}, 
there exists a unique $\tau$-preserving conditional expectation $E\colon B_L\to\Wst(u_1,\dots,u_m)$. 
Then $E\circ P_L$ is a projection of norm one from $B(H)$ onto
$\Wst(u_1,\dots,u_m)\subseteq B_L$. 
Hence $\Wst(u_1,\dots,u_m)$ is injective. 
\end{proof}


\section{Proof of the Main Theorem}\label{sec:main}

\begin{proof}[Proof of the Main Theorem]
Let $N$ satisfy (H3). By \cite[Theorem V.1.19]{Tak}, 
there are central projections $z_\infty$, $z_{\mathrm I}$ and $z_{\mathrm{II}}$
with sum $1$ such that:
\begin{itemize}
\item $z_\infty N$ is properly infinite (the sum of the type II$_\infty$ and III parts and of the
properly infinite type I part);
\item $z_{\mathrm I}N$ is finite of type I;
\item $z_{\mathrm{II}}N$ is of type II$_1$.
\end{itemize}
Each summand satisfies (H3) by Corollary~\ref{cor:heredity}(a).
\begin{enumerate}
\item $z_{\mathrm I}N$ is injective, being of type I.
\item Since $z_\infty N$ is properly infinite,
$M_k(z_\infty N)\cong z_\infty N$ for every $k$.
Hence $M_k(z_\infty N)$ satisfies (H3) for every $k$ by Corollary 3.2(c), and $z_\infty N$ is injective by Proposition~\ref{prop:pi}.
\item By \cite[Corollary V.2.9]{Tak}, $z_{\mathrm{II}}N=\bigoplus_\alpha N_\alpha$, a direct sum of central summands $N_\alpha$,
each of which is $\sigma$-finite and of type II$_1$. 
Each
$N_\alpha$ satisfies (H3) by Corollary~\ref{cor:heredity}(a), and is therefore injective by
Proposition~\ref{prop:finite}.
\end{enumerate}
Therefore, $N$ is injective.
\end{proof}


\section{Remarks}\label{sec:remarks}

\begin{remark}
Since the converse is the Wittstock--Paulsen theorem
(\cite{Wogen}, \cite{Paulsen}) the Main Theorem shows that $N$ is injective if and
only if $\|T\|_{\dec}=\|T\|_{\cb}$ for all $T:\ell^\infty_3\to N$. This is optimal, because the identity
holds for every $N$ when $\ell^\infty_3$ is replaced by $\ell^\infty_2$ \cite[Proposition~3.4]{Haa85}.
\end{remark}

\begin{remark}
The proof above does not use
\cite{OAIgen}.
Proposition~\ref{prop:K1} is applied only to the algebras $C_{t-1}$, which are known to be injective by the
induction hypothesis. The single generator of $B_t$, which is not yet known to be injective, is
constructed explicitly in Lemma~\ref{lem:matrix}.
\end{remark}

\begin{remark}\label{rem:OAI}
By \cite{OAIgen}, every von Neumann algebra with separable predual is singly
generated. Granting this, the Main Theorem has a short proof, which bypasses the induction of
Section~\ref{sec:finite}, Lemma~\ref{lem:matrix} and Propositon~\ref{prop:pi}. 

Let $N$ satisfy (H3).
Choose a net $(p_\alpha)$ of $\sigma$-finite projections in $N$ such that $p_\alpha\nearrow 1$.
Set $N_\alpha=p_\alpha Np_\alpha$ with a faithful
normal state $\varphi_\alpha$.
By Corollary \ref{cor:heredity}(a), $N_\alpha$ satisfies (H3).
For a finite set $F\subset\U(N_\alpha)$, let $N_F$ be the von Neumann algebra generated
by $\sigma^{\varphi_\alpha}_t(F)$, $t\in\mathbb Q$.
Then $N_F$ is the range of a normal conditional expectation, so it
satisfies (H3), and it has separable predual.
By \cite{OAIgen}, it is singly generated, and by 
Corollary~\ref{cor:single},
$N_F$ is injective.
By \cite[Proposition~5.7]{EL}, $N_\alpha$ is injective,
and so is $N$.

We keep the longer proof because it does not depend on \cite{OAIgen}.
\end{remark}

\subsection*{Acknowledgment}
This work was carried out during the author's stay at the Institut Henri Poincar\'e (IHP) for the programme on Operator Algebras: Approximation, Rigidity and Dynamics. The author is grateful to the IHP for its hospitality.

\subsection*{AI statement}
This work was carried out in collaboration with an AI system (Claude Opus 5.5). The main argument was developed in discussions between the author and the AI system, and the manuscript was drafted mainly by the AI system. The author has verified all proofs and takes full responsibility for the content of this paper.



\begin{thebibliography}{99}

\bibitem{Connes}
A. Connes, \emph{Classification of injective factors. Cases II$_1$, II$_\infty$, III$_\lambda$, $\lambda\neq1$},
Ann.\ of Math.\ (2) \textbf{104} (1976), 73--115.

\bibitem{EL}
E. G. Effros, E. C. Lance, \emph{Tensor products of operator algebras},
Adv.\ Math.\ \textbf{25} (1977), 1--34.

\bibitem{Haa85}
U. Haagerup, \emph{Injectivity and decomposition of completely bounded maps},
in: Operator Algebras and their Connections with Topology and Ergodic Theory (Bu\c{s}teni, 1983),
Lecture Notes in Math.\ \textbf{1132}, Springer, Berlin, 1985, pp.~170--222.

\bibitem{Hayes}
B. Hayes, \emph{On the generator problem for $W^*$-bundles}, arXiv:2609.16428.

\bibitem{JLM}
M. Junge, C. Le Merdy, \emph{Factorization through matrix spaces for finite rank operators between
$C^*$-algebras}, Duke Math.\ J.\ \textbf{100} (1999), no. ~2, 299--319.

\bibitem{OAIgen}
OpenAI, \emph{Relative generation and the generator problem for finite factors}, preprint (not yet
refereed), September 23, 2026.

\bibitem{Paulsen}
V.I. Paulsen, \emph{Every completely polynomial bounded operator is similar to a contraction},
 J.\ Funct.\ Anal. \textbf{55} (1984), no.~1, 1--17.

\bibitem{Pearcy}
C. Pearcy, \emph{$W^*$-algebras with a single generator}, Proc.\ Amer.\ Math.\ Soc.\ \textbf{13} (1962), 831--832.

\bibitem{Pis19}
G.~Pisier, \emph{On a characterization of the weak expectation property (WEP)}, arXiv:1907.06224.

\bibitem{Pis20}
G. Pisier, 
\emph{Tensor products of C$^*$-algebras and operator spaces---the Connes--Kirchberg problem},
London Math.\ Soc.\ Stud.\ Texts, \textbf{96},
Cambridge Univ.\ Press, Cambridge, 2020.


\bibitem{SuzukiSaito}
N. Suzuki, T. Sait\^o, \emph{On the operators which generate continuous von Neumann algebras},
T\^ohoku Math.\ J.\ (2) \textbf{15} (1963), 277--280.

\bibitem{Tak}
M. Takesaki, \emph{Theory of Operator Algebras I}, reprint of the first (1979) edition, 
Encyclopaedia of Mathematical Sciences Operator Algebras and Non-commutative Geometry, \textbf{124} 5, Springer, Berlin, 2002.

\bibitem{Willig}
P. Willig, \emph{Generators and direct integral decompositions of $W^*$-algebras},
T\^ohoku Math.\ J.\ (2) \textbf{26} (1974), 35--37.
P.~M. Willig, Generators and direct integral decompositions of $W\sp{\ast} $-algebras, Tohoku Math. J. (2) {\bf 26} (1974), 35--37; MR0336369

\bibitem{Wittstock}
G. Wittstock, \emph{Ein operatorwertiger Hahn-Banach Satz},
J.\ Funct.\ Analysis  \textbf{40} (1981), no.~2, 127--150.


\bibitem{Wogen}
W. R. Wogen, \emph{On generators for von Neumann algebras}, Bull.\ Amer.\ Math.\ Soc.\ \textbf{75} (1969), 95--99.


\end{thebibliography}
\end{document}